\def\*{\textsuperscript{$\!\ast$}}
\newtheorem{theorem}{Theorem}
\newtheorem{corollary}{Corollary}
\newtheorem{proposition}{Proposition}
\newtheorem{lemma}{Lemma}
\newtheorem{example}{Example}
\newtheorem{definition}{Definition}
\title{The Infinity of Randomness}
\name{Yongxin Li}
\address{School of Statistics and Mathematics, Central University of Finance and Economics}
\begin{document}
%
\maketitle
\begin{abstract}
This work starts from definition of randomness, the results of algorithmic randomness are analyzed from the perspective of application. Then, the source and nature of randomness is explored, and the relationship between infinity and randomness is found. The properties of randomness are summarized from the perspective of interaction between systems, that is, the set composed of sequences generated by randomness has the property of asymptotic completeness. Finally, the importance of randomness in AI research is emphasized.
\end{abstract}
\begin{keywords}
Randomness, Infinity, Asymptotic Completeness, AI
\end{keywords}
\section{Randomness}
\subsection{Algorithmic Randomness}
 Randomness is one of the fundamental concepts of statistical and probability theory. The original motivation for defining randomness and studying the subsequent properties is to provide a foundation for probability theory. While the mathematical formalization of probability was generally solved by Kolmogorov, it's still unable to provide a proper formalization of randomness. Here we turn to algorithmic randomness, though the combination of algorithms and randomness appears to be incompatible and paradoxical, it presents reasonable after a deeper probing of their connection.
 
It was not until the beginning of the 20th century that mathematical definitions of randomness began to emerge. Attempting to provide an intuitively definition of probability based on a better understanding of randomness, Von Mises defined randomness in a sequence of observations in terms of unpredictability, he argued that a sequence should count as random if all ``reasonable" infinite subsequences satisfy the law of large numbers, the subsequences should have the same proportion of 0's and 1's in the limit under ``acceptable selection rules", and the notion of collective is defined on the random sequence.\cite{von1919grundlagen} Church perfectly made the definition of collective of von Mises mathematically formalized and reduced the selections of subsequences to computability. \cite{church1940concept} 
A completely different breakthroughs in the task of offering a definition of randomness was proposed by Kolmogorov \cite{kolmogorov1965three} and, independently, Solomonoff \cite{solomonoff1964formal,Solomonoff1964} ,  Chaitin\cite{chaitin1966length}. Their idea was to measure the amount of information in finite strings using the algorithmic approach, actually, reduce the notion of randomness to the notion of complexity.  The basic properties of Kolmogorov complexity were further developed in the 1970s. Independently, C. P. Schnorr \cite{SCHNORR1973376} and L. Levin \cite{levin1973notion} found a link between complexity and the notion of algorithmic randomness by introducing the so-called ``monotone complexity". And complexity turned out to be an important notion both in theory of computation and probability. 
Around the same time, Martin-L{\"o}f \cite{martin1966definition} developed a measure-theoretic approach to the notion of random sequences. The idea behind this approach is to identify the notion of randomness with the notion of typicalness, and random sequences should be statistically typical.
Since Martin-L{\"o}f's  definition in the setting of higher recursion theory, a hierarchy of algorithmic randomness notions has been developed, corresponding to to certain aspects of our intuition. \cite{downey2006calibrating,downey2010algorithmic,nies2012computability}

Though the defitions of randomness above focus on different aspects of randomness, they proposes algorithmic criteria distinguish random phenomena from non-random ones, and essentially they are equivalent to each other.  In the 1970s, Schnorr  and Chaitin \cite{chaitin1975randomness} found that Martin-L{\"o}f randomness is equivalent to the randomness defined by program-size complexity in describing random infinite binary sequences. Moreover, A further connection has been found that any Kolmogorov-Chaitin (Martin-L{\"o}f ) random sequence is also Mises-Wald-Church random.\cite{khrennikov2016probability} 

\subsection{Random Number Generators}

Though randomness stands alone, away from the human's systematic understanding of the world, we've managed to learn how to use randomness to study and explore the world, such as the usage of random numbers. Random numbers play important role in cryptographic applications, in numerical simulations of complex physical, chemical, biological and statistical systems.

Generally, there are two types of random number generators (RNG). They are true RNGs and pseudo-RNGs(PRNG). The true RNGs are based on some physical processes bonded with the environment or genuine quantum features. The pseudo-RNGs rely on the output of a deterministic algorithm with a shorter random seed. Pseudo RNGs rely on the output of a deterministic algorithm with a short random seed. 

PRNGs are widely used in applications related to Monte Carlo methods. Named by Drs. Stanislaw Ulam, John von Neumann, Nicholas Metropolis, etc, in the 1940's, Monte Carlo methods  is a numerical method that makes use of random numbers to solve mathematical problems, for which an analytical solution is not available. \cite{metropolis1949monte} 

As deterministic algorithms, PRNGs seem reliable under the certain tests, but because of the finite period and the limitation of the theoretical basis, they usually have undetected long range correlations and other unexpected defects that will result in wrong solutions.\cite{marsaglia1968random,de1988parallelization,de1990long,kalle1984problems,ferrenberg1992monte,ossola2004systematic,grassberger1993correlations} 
 It has been realized that good PRNG should have such qualities: sound theoretical basis, long period, ability to pass empirical tests, and efficient, repeatable, and portable.\cite{gentle2003random}

\section{Source of Randomness and Intelligence}
\subsection{Interactions between Subsystems}

 In direct cognitional terms, randomness is defined in relation to human knowing, whether the human scientist can reach a systematic understanding of all that he can know, in the present and future. \cite{mcshane1970randomness}   Being a subsystem of the universe, we humans gain our knowledge of the universe through observation, measurement, induction, and reasoning, and realize the deep mathematical principles on which the universe runs. 

After the establishment of Newtonian mechanics, people once thought that if someone knows the initial state of every atom in the universe,  the values of position and velocity for any given time in the past and future can be completely calculated from  the laws of Newtonian mechanics. \cite{pierre2007philosophical} However,  in his research on the three-body problem, Poincaré discovers a chaotic deterministic system, that given the law of gravity and the initial positions and velocities of the only three bodies in all of space, though the subsequent positions and velocities are fixed, but appear random because of  our inability to have infinite precision and infinite power. \cite{poincare1967new}

Developed in the early decades of the 20th century,  quantum mechanics introduces an element of unpredictability or randomness in the scientific description of subatomic particles. Different from classical physics, in quantum mechanics energy, momentum, angular momentum, and other quantities of a bound system are restricted to discrete values. The accuracy of physical quantities, such as position and velocity of subatomic particles, interacts, illustrating the absence of our ability to have infinite precision and infinite power.  Even more, theoretical predictions, confirmed experimentally, such as the violation of Bell inequalities, point to an affirmative answer that our universe emerges as intrinsically non-deterministic and unpredictable.\cite{bell1964einstein,bell2004speakable,hensen2015loophole}

As for the randomness in quantum physics, parallel to the contemporary theory of quantum measurements, and collapse of the wave function, from many physicists' point of view,  the sources of randomness originate from the interaction between the observed system and environment.  \cite{wheeler2014quantum,zurek2003decoherence,zurek2009quantum} It is logical to assume that the ``system" and the environment in consideration are subsystems of a grand system (maybe the universe), where many subsystems operate according to their rules and interact with each other. Such interactions, especially in a large number of subsystems, create an extremely complex behavior that can only be studied in terms of randomness. As observers, comprehenders and explainers, we human beings should also be one of the subsystems, and randomness appearing in the process of observations and measurements should be outcome of interactions among the subsystems.

The process of interactions shows us a algorithmic approach to randomness. The ``system"  could also be thought of as a mathematical system defined with a set of simple rules or equations. We can choose a series of mathematical systems, measure and collect their interacting behavior to produce randomness.  We should keep in mind that behind all emergence of randomness exists the notion of infinity. Randomness comes behind the absence of infinite knowledge, infinite power or the interplay of infinite agents. The planet we live in exists in four-dimensional space-time, and infinity is the intrinsic property of time and space, making the emergence of randomness, rules, and everything around us possible.

\subsection{Randomness and Artificial Intelligence}

\textit{Why do you insist that the Universe is not a conscious intelligence, when it gives birth to conscious intelligences?}  (Cicero, c. 44 bce) \cite{Berezin2018isotopic}

We have the ability to comprehend the internal workings of atoms,  the nature of black holes and G{\"o}del’s  incompleteness theorem. But does the universe be structured such as to be totally comprehensible to a subsystem of itself?
Human beings are more than mere observers; we are also comprehenders and universal explainers. \cite{deutsch2011beginning}

Starting with the work of Turing in 1950 \cite{10.1093/mind/LIX.236.433} Artificial Intelligence (AI) technology and its related applications become part of people's daily life. The great success of AI technology such as machine learning, data mining, computer vision, natural languages processing, ontological-based search engine, has enabled machines to play chess, paint, drive vehicles, and even chat with people. As R. Penrose  pointed to the transcendental nature of human mind\cite{10.1093/oso/9780198519737.001.0001},  human mind seems not to be driven by computer programs. Artificial Intelligence presents a new forum within which to address a large number of philosophical questions, both classical and novel, about the nature of life, intelligence, and existence. As for AI, debates  usually get hopelessly mired down within the seemingly bottomless pit of subjective concepts like ``mind" and ``consciousness."

The situation forces us to rethink what it is to be ``alive", as well as ``intelligent". To understand intelligence, a natural idea is to study ourselves. It's impossible to list strictly behavioral criteria for the ``intelligence", part of the failure of AI to achieve ``intelligence" is due to the fact that it is setted chasing a moving target. It has been thought that playing chess required true intelligence, until computers can beat best players of humans. The criteria of ``intelligence" lie in the process of life.

Life is a planetary process in the context of space and time, the emergence and evolution of life is deeply influenced by the environment of our planet. Interacting with the environment, living systems have developed different orders at many scales, in terms of composition, spatial configuration, and dynamics. And living states are determined at the microscale by the quantum mechanics of atomic and molecular orbitals, and are governed by the orbital-scale dynamics of reactions and the physical chemistry of molecular assemblies.  \cite{smith2016origin}  
 
A living system is comprised of many subsystems operating at vast ranges of space and time scales.   Since all subatomic particles obey quantum mechanics and all matter is made out of these particles, quantum mechanics plays a crucial role in  a broad range of complex and dynamic biological processes and functions.\cite{kim2021quantum}  

In other words, intrinsic randomness can propagate from the microscopic constituents of matter to macroscopic world and act in the living systems, and to make AI more ``intelligent", it's necessary to offer AI the ability to grow and evolve from dealing with randomness phenomena.

\section{Projection of Infinity}
\subsection{Properties of randomness}
\begin{definition}
\textbf{System}(S): a group of physical or logical,  interacting or interrelated elements with properties can be observed or measured; \textbf{Subsystem}(SS): part of the whole system that can be treated as a unity.
\end{definition}

\begin{example}
In the game of playing dice, the whole system consists of the dice, the players, and the environment. The number of dots on the upper face is the property to be observed. The dice, the players and the environment are the subsystems of the whole system.
\end{example}

\begin{example}
The whole instruction set of a electronic computer is a system, and the instructions used to produced sequences of pseudorandom numbers compose a subsystem, the sequences of pseudorandom numbers is the property to be observed.
\end{example}

\begin{lemma}[Strong structure theorem, \cite{tao2007structure}] \label{lem-tao}
Let H be a real finite-dimensional Hilbert space, $S = \{v: \| v \| _{H} \leqslant 1, \forall  v \in S\} \subset H$, let $\epsilon >0$, and let $F: \mathbf{Z}^{+} \rightarrow \mathbf{R}^{+}$ be an arbitrary function. Let $f\in H$ be such that $\| f \| _{H} \leqslant 1$. Then we can find an integer $M = O_{F, \epsilon}(1)$ and a decomposition $$f = f_{str} + f_{psd} + f_{err}$$ where $f_{str}$ is a $(M, M)$ structured vector that there exits a decomposition $f_{str} = \sum_{1 \leqslant i \leqslant M} c_{i}v_{i}$ (for $v_{i} \in S$ and $c_{i} \in [-M, M], \forall 1\leqslant i \leqslant M$), $f_{psd}$ is a $1/F(M)$ pseudorandom vector, and $f_{err}$ is a vector that has norm at most $\epsilon$, $O_{F, \epsilon}(1)$ denotes some quantity depending only on F and $\epsilon$ .
\end{lemma}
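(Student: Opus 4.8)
\emph{Proof plan.} The plan is to run a regularity-lemma-style energy increment, iterated along a rapidly growing schedule of correlation thresholds, and to close it off with a pigeonhole step that manufactures the error term. Call $g\in H$ \emph{$\tau$-pseudorandom} if $|\langle g,v\rangle|\le\tau$ for every $v\in S$; at the outset I would assume $F$ nondecreasing (replacing $F(n)$ by $\max_{k\le n}F(k)$ only strengthens the statement). The elementary move is: given the orthogonal projection $P$ onto the span of finitely many $v_1,\dots,v_m\in S$, write $f=Pf+g$ with $g:=f-Pf$; if $g$ is $\tau$-pseudorandom, stop; otherwise pick $v\in S$ with $|\langle g,v\rangle|>\tau$ and pass to the projection onto $\mathrm{span}(v_1,\dots,v_m,v)$. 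Writing $\tilde v:=v-Pv$, this decreases $\|f-Pf\|^2$ by exactly $|\langle g,v\rangle|^2/\|\tilde v\|^2\ge|\langle g,v\rangle|^2>\tau^2$, since $\|\tilde v\|\le\|v\|\le 1$. As $\|f-Pf\|^2\le\|f\|^2\le 1$ throughout, for a fixed $\tau$ this loop halts in fewer than $1/\tau^2$ steps, leaving a $\tau$-pseudorandom remainder together with a structured vector $Pf=\sum_ic_iv_i$ having fewer than $1/\tau^2$ terms; moreover each Gram--Schmidt vector satisfies $\|\tilde v_i\|>\tau$, which (crudely but explicitly) bounds the $|c_i|$ in terms of $1/\tau$ and the number of terms.

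Next I would iterate this move over milestones $i=0,1,2,\dots$ Initialize $H_0:=0$, $G_0:=f$, $C_0:=1$. Given data $(H_i,G_i,C_i)$ at milestone $i$ --- where $H_i$ is a $(C_i,C_i)$-structured vector and $G_i=f-H_i$ is orthogonal to the span $V_i$ of all vectors extracted so far --- put $\tau:=1/F(C_i)$; if $G_i$ is already $\tau$-pseudorandom, stop; otherwise run the elementary loop from the current state with threshold $\tau$ until the remainder is $\tau$-pseudorandom, reaching $(H_{i+1},G_{i+1},C_{i+1})$, where $C_{i+1}$ is the resulting structure bound --- an explicit function of $C_i$, since both the number of freshly extracted vectors and the size of all coefficients are controlled by $1/\tau=F(C_i)$. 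Hence each $C_i$ is a function of $F$ and $i$ alone; it may be a tower- or Ackermann-type expression in $F$, but it is always finite. Because $V_0\subseteq V_1\subseteq\cdots$, the energies $\|G_i\|^2$ decrease within $[0,\|f\|^2]\subseteq[0,1]$, and $G_{i+1}$ is $1/F(C_i)$-pseudorandom by construction; two uses of Pythagoras (for $f=H_i+G_i$ with $H_i\perp G_i$, and for the nesting $V_i\subseteq V_{i+1}$) yield the exact identity
$$\|H_{i+1}-H_i\|^2=\|H_{i+1}\|^2-\|H_i\|^2=\|G_i\|^2-\|G_{i+1}\|^2 .$$

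To finish, I would run the iteration up to milestone $N:=\lceil 1/\epsilon^2\rceil$ (if it stopped earlier, then $f_{err}=0$ and we are already done). Since $0\le\|G_N\|^2\le\cdots\le\|G_0\|^2\le 1$, the pigeonhole principle produces an index $j\in\{0,\dots,N-1\}$ with $\|G_j\|^2-\|G_{j+1}\|^2\le 1/N\le\epsilon^2$. I would then set
$$f_{str}:=H_j,\qquad f_{err}:=H_{j+1}-H_j,\qquad f_{psd}:=G_{j+1},\qquad M:=C_j .$$
This gives $f=f_{str}+f_{psd}+f_{err}$; $f_{str}$ is $(M,M)$-structured by construction; $\|f_{err}\|^2=\|G_j\|^2-\|G_{j+1}\|^2\le\epsilon^2$ by the displayed identity; $f_{psd}=G_{j+1}$ is $1/F(C_j)=1/F(M)$-pseudorandom; and $M=C_j\le C_N$, which depends only on $F$ and $N=\lceil1/\epsilon^2\rceil$, hence only on $F$ and $\epsilon$, so $M=O_{F,\epsilon}(1)$.

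The hard part --- and the reason one application of the elementary move is not enough --- will be the apparent circularity between the two output parameters: we want the remainder $1/F(M)$-pseudorandom, but pushing the correlation below $1/F(M)$ forces more extraction steps and so inflates $M$. The way out is to decouple them: the threshold $\tau=1/F(C_i)$ refers only to the already-fixed $C_i$, so no real circularity occurs; the complexities are simply allowed to climb an a priori fast-growing sequence; and the error term lets us bail out at one of the $\approx 1/\epsilon^2$ scales where the structured part barely moves, which must occur because the total energy is at most $1$. The delicate bookkeeping will be maintaining honest coefficient bounds once we re-orthogonalize at each step --- projecting onto the full span $V_i$ is exactly what makes the Pythagorean identity hold exactly --- and this is kept finite by the fact that the extracted vectors are $\tau$-independent in the Gram--Schmidt sense ($\|\tilde v_i\|>\tau$, with $\tau$ the threshold in force), which is what ultimately keeps $M$ bounded.
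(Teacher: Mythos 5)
Your proposal is correct: the energy-increment ``elementary move,'' the milestone thresholds $\tau = 1/F(C_i)$, the exact Pythagorean identity $\|H_{j+1}-H_j\|^2 = \|G_j\|^2-\|G_{j+1}\|^2$, and the pigeonhole over $N=\lceil 1/\epsilon^2\rceil$ scales are precisely the standard proof of the strong structure theorem in the cited source \cite{tao2007structure}, and your handling of the coefficient bounds via the Gram--Schmidt lower bound $\|\tilde v_i\|>\tau$ closes the only delicate bookkeeping point. Note that the paper itself offers no proof of this lemma --- it is imported by citation --- so there is nothing internal to compare against; your argument reconstructs the cited one.
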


\begin{proposition}
There exit systems with true randomness, and the randomness can be extracted with observable properties of logical systems. 
\end{proposition}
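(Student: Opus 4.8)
The plan is to split the statement into two claims and treat them in turn: (i) genuine, non‑deterministic randomness is realized in physical systems, and (ii) such randomness can be exhibited through quantities we can observe or measure on a logical (mathematical) system, the main tool for (ii) being Lemma~\ref{lem-tao}.

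For (i) I would simply appeal to the experimental record already cited above: the loophole‑free violation of Bell inequalities rules out every local deterministic (hidden‑variable) account of the measurement statistics of entangled quantum systems. Taking such a quantum system together with its measuring apparatus and environment as a \textbf{system} in the sense of the Definition, the measured property --- say a spin component --- cannot then be the output of any deterministic rule, and this is exactly what is meant by true randomness. This settles the existence half of the Proposition.

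For (ii) the idea is to regard the observable output of a logical system --- a finite initial segment of a pseudorandom sequence, as in the second Example --- as a unit vector $f$ in a real finite‑dimensional Hilbert space $H$, with $S$ the set of ``simple'', structured states into which the system decomposes. Fixing a tolerance $\e>0$ and a rapidly growing growth function $F$, Lemma~\ref{lem-tao} yields an integer $M=O_{F,\e}(1)$ and a decomposition
$$ f = f_{str} + f_{psd} + f_{err}, $$
with $f_{str}$ a bounded combination of $M$ structured generators, $\norm{f_{err}}\le\e$, and $f_{psd}$ a $1/F(M)$‑pseudorandom vector. The component $f_{psd}$ is the randomness extracted from the observable $f$: whatever is not captured by a short structured description or by a small error is, quantitatively, pseudorandom, and its level $1/F(M)$ can be pushed to $0$ by enlarging $F$.

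The main obstacle --- and the point where ``infinity'' in the title of this section does the real work --- is upgrading ``pseudorandom at level $1/F(M)$'' to ``truly random''. For any single finite logical system the residue $f_{psd}$ is only pseudorandom; a deterministic algorithm cannot manufacture true randomness on its own. The way around this is the mechanism stressed earlier: instead of one system, take an unbounded family of interacting subsystems, apply the decomposition along the family while letting $\e\to 0$ and $F\to\infty$, and argue that the structured parts are exhausted in the limit, so that the residual behaviour becomes asymptotically indistinguishable from that of the genuine physical source of part (i). Making this limit precise --- choosing the right topology on the space of sequences, and the sense in which the limiting residue counts as ``random'' (the asymptotic completeness of this set of sequences announced in the abstract) --- is the hard step; Lemma~\ref{lem-tao} is precisely the device that keeps the three pieces under simultaneous control as the parameters run to infinity, so that the limit can be taken at all.
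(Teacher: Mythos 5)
Your part (i) is essentially the paper's: the paper also derives the existence of truly random systems from the fact that all matter is built from quantum particles, and your Bell-inequality elaboration only makes explicit what the paper cites in its physics discussion. The divergence, and the gap, is in part (ii). The paper's proof uses the \emph{structured} component $f_{str}$ of the decomposition in Lemma~\ref{lem-tao}: the observable property of the logical system is segmented via $f_{str} = \sum_{1\leqslant i\leqslant M} c_i v_i$, and the true randomness of the \emph{physical} source from part (i) is then made to act on that segmentation, so the output is random because it inherits randomness from outside the logical system. This is exactly the composition $G := RS \circ SS$ set up afterwards: the logical system is only a carrier whose observable structure is correlated with an external random system. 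You instead single out $f_{psd}$ and try to have the logical system's own pseudorandom residue become truly random in a limit over an unbounded family of systems with $\e\to 0$ and $F\to\infty$.

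That last step is the gap, and you name it yourself: no deterministic logical system, and no limit of deterministic logical systems taken internally, produces true randomness --- Lemma~\ref{lem-tao} only certifies $1/F(M)$-pseudorandomness of $f_{psd}$, and driving $F$ to infinity does not change the fact that the whole object is computable; this is precisely the content of Lemma~\ref{lem-chai} and the reason the paper distinguishes pseudorandomness (a property of logical systems) from true randomness (a property of physical ones). The "asymptotic completeness" you invoke is established in Theorem~\ref{the-main} for sequences already generated by a measurement $m: \rm{RS}\circ\rm{SS} \rightarrow \mathbf{Z}/q\mathbf{Z}$ that presupposes a random system $RS$; it cannot be used here to conjure the randomness in the first place without circularity. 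The repair is to flip the roles of the two components: keep $f_{psd}$ and $f_{err}$ as the parts to be discarded or controlled, use $f_{str}$ to define the equal segmentation of the logical system's observable, and couple that segmentation to the physical source of part (i). Then the extraction claim follows with no limiting argument at all.
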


\begin{proof}
Since all subatomic particles obey quantum mechanics and all physical systems are made out of these particles, the physical systems should have properties of true randomness naturally.

According to Lemma \ref{lem-tao}, we can have a segmentation of logical systems in terms of $f_{str}$, and let the randomness affect the segmentation, the output will be random.
\end{proof}

\begin{definition}
\textbf{Random system} (RS): subsystems that the true randomness properties of which can be measured properly. 
\end{definition}

The randomness of RS can be extracted with measurement systems. Let designate a measurement $m: \rm{RS} \circ \rm{SS} \rightarrow \mathbf{Z/}q\mathbf{Z}$, that divides the observable properties of SS equally into q segments and correlates the randomness of RS with the segmentation. If we take the measure n times, we'll get a sequence $r_{in}:=m_{1}m_{2}...m_{n}, 1\leqslant i \leqslant \infty$. Let $R_{n} = \{r_{in}\}$ be set of all the output sequences of n-times measurement.

\begin{theorem} \label{the-main}
As $n \rightarrow \infty$, the cardinality of $R_{n}$ is the cardinality of the set of real number $\mathbf{R}$.
\end{theorem}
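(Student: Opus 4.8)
The plan is to read the statement as a claim about the space of infinite symbol sequences: ``as $n\to\infty$'' we should interpret $R_\infty:=\lim_{n\to\infty}R_n$ as the set of infinite words $r=m_1m_2m_3\cdots$ with each $m_j\in\mathbf{Z}/q\mathbf{Z}$ that arise from the measurement process, i.e.\ $R_\infty\subseteq(\mathbf{Z}/q\mathbf{Z})^{\mathbf{N}}$. The theorem then asserts $|R_\infty|=|\mathbf{R}|$, and I would establish this by a two-sided cardinality estimate together with Cantor--Schr\"oder--Bernstein. The upper bound is immediate: since every output sequence is by construction a function $\mathbf{N}\to\mathbf{Z}/q\mathbf{Z}$, we have $|R_\infty|\le q^{\aleph_0}=2^{\aleph_0}=|\mathbf{R}|$ as soon as $q\ge 2$. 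So the real content is the matching lower bound $|R_\infty|\ge|\mathbf{R}|$.

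For the lower bound I would invoke the \emph{true} randomness of the underlying random system RS, guaranteed by the Proposition above, and the way the measurement $m$ was set up: via the segmentation furnished by Lemma~\ref{lem-tao}, $m$ partitions the observable property of SS into $q$ equal pieces and slaves that partition to the genuine randomness of RS. The structural fact I want to extract from this is \emph{full support at every stage}: for any finite prefix $m_1\cdots m_k$ that is realizable, each of the $q$ one-step extensions $m_1\cdots m_k a$, $a\in\mathbf{Z}/q\mathbf{Z}$, is again realizable, because the next measurement simply records which of the $q$ equiprobable segments the freshly randomized state lands in, independently of the past. Consequently the realizable finite prefixes form the \emph{complete} $q$-ary tree $T_q$, and $R_\infty$ is precisely its set of infinite branches.

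Counting branches then finishes the argument. Restricting to the two symbols $0,1$ embeds the full binary tree into $T_q$ and yields an injection $\{0,1\}^{\mathbf{N}}\hookrightarrow R_\infty$, so $|R_\infty|\ge 2^{\aleph_0}=|\mathbf{R}|$; with the upper bound this gives $|R_\infty|=|\mathbf{R}|$. Equivalently, one may route the whole computation through the base-$q$ expansion $(\mathbf{Z}/q\mathbf{Z})^{\mathbf{N}}\to[0,1]$, $(a_j)\mapsto\sum_{j\ge 1}a_j q^{-j}$, which is surjective and at most countable-to-one; since the realizable sequences exhaust every prefix they map \emph{onto} all of $[0,1]$, making the appearance of the continuum $\mathbf{R}$ transparent and explaining the phrasing of the theorem.

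The step I expect to be the genuine obstacle is the middle one — rigorously justifying ``full support at every stage,'' i.e.\ that the interaction-driven randomness of RS is rich enough that no finite measurement history ever kills off a branch of the $q$-ary tree. This is exactly the \emph{asymptotic completeness} property advertised in the abstract, and it is where the physical input (quantum indeterminacy propagating to a macroscopic read-out, as discussed in Section~2) has to be fused with the algorithmic input (the decomposition of Lemma~\ref{lem-tao} being \emph{genuinely} steered by that randomness, not merely perturbed by it) into a single clean hypothesis on the admissible measurements $m$. Once that hypothesis is isolated and stated, the remaining cardinality bookkeeping is routine.
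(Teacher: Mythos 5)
Your route is essentially the paper's route: both arguments identify the limiting output sequences with base-$q$ expansions $0.m_1m_2\ldots$ of points of $[0,1]$ and then argue that every point of $[0,1]$ is realized. The differences are in your favor. First, the paper claims an outright \emph{bijection} between the limit of $R_n$ and $[0,1]$, which is false as stated because base-$q$ expansion is not injective ($0.0999\ldots = 0.1000\ldots$ for $q=10$); your two-sided bound plus Cantor--Schr\"oder--Bernstein (equivalently, noting the expansion map is at most two-to-one onto $[0,1]$) is the correct repair, and it costs nothing since only cardinality is at stake. Second, and more importantly, you explicitly isolate the one substantive step: that every finite prefix in $(\mathbf{Z}/q\mathbf{Z})^k$ is realizable by the measurement process, so that the realizable prefixes form the complete $q$-ary tree and the expansion map is onto $[0,1]$. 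The paper's proof contains exactly this step as the unsupported sentence ``for $\forall r_{i} \in [0,1]$, we can find a sequence $r_{in} \in R_{n}$ s.t.\ $0.r_{in}=r_{i}$'' --- it is asserted, not derived from the definitions of RS, SS, and the measurement $m$, and neither Lemma~\ref{lem-tao} nor the preceding Proposition supplies it. So the obstacle you flag is not a defect of your write-up relative to the paper; it is the shared gap of both arguments, and you are the one who names it. If you add ``full support at every stage'' as an explicit hypothesis on $m$ (each of the $q$ segments is attainable at every step regardless of history), your argument is complete and strictly more careful than the paper's own.
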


\begin{proof}
Without loss of generality, we label the element of $R_{n}$ as $0.m_{1}m_{2}...m_{n}$, and let $\rm{q} =10$. So we have $0 \leqslant 0.r_{in} \leqslant 1,  n \rightarrow \infty$, i.e. $\exists r_{i} \in [0,1], s.t. \ r_{i} = 0.r_{in}$.

And for $\forall r_{i} \in [0,1]$, we can find a sequence $r_{in} \in R_{n}, s.t. \ 0.r_{in} =r_{i}, n \rightarrow \infty$.

So there exits a bijection map between $R_{n}$ and $[0,1]$ as $n \rightarrow \infty$.
\end{proof}

Theorem \ref{the-main} reveals the equivalence of randomness and space-time, and explains the efficiency of statistical methods.

\begin{definition}
\textbf{Asymptotic Completeness}: If As $n \rightarrow \infty$, the cardinality of $R_{n}$ is the cardinality of the set of real number $\mathbf{R}$, $R_{n}$ has the property of asymptotic completeness.
\end{definition}

\begin{corollary}
Randomness extracted from different physical systems is equal in terms of asymptotic completeness.
\end{corollary}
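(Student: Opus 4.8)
The plan is to derive the statement directly from Theorem~\ref{the-main}, applied separately to each physical system under consideration. Fix two physical systems and, using the Proposition, regard each as a random system $\mathrm{RS}_1$, $\mathrm{RS}_2$ whose true randomness is guaranteed by the fact that their subatomic constituents obey quantum mechanics. For each $\mathrm{RS}_i$ choose a measurement $m^{(i)}\colon \mathrm{RS}_i \circ \mathrm{SS} \to \mathbf{Z}/q\mathbf{Z}$ with the same modulus, say $q = 10$, and form the sequence sets $R_n^{(1)}$ and $R_n^{(2)}$ exactly as in the paragraph preceding Theorem~\ref{the-main}.

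First I would invoke Theorem~\ref{the-main} twice: once for $\mathrm{RS}_1$ and once for $\mathrm{RS}_2$. This yields, in the limit $n \to \infty$, a bijection $\phi_1\colon R_n^{(1)} \to [0,1]$ and a bijection $\phi_2\colon R_n^{(2)} \to [0,1]$, each obtained by the labelling $r_{in} \mapsto 0.m_1 m_2 \ldots m_n$ used in the proof of that theorem. Composing, $\phi_2^{-1} \circ \phi_1$ is a bijection between $R_n^{(1)}$ and $R_n^{(2)}$ as $n \to \infty$; hence both sets have cardinality $|\mathbf{R}|$, and by the definition of asymptotic completeness both enjoy that property. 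Since the construction made no use of any feature of the physical systems beyond the presence of true randomness --- which the Proposition supplies for every physical system --- the same argument applies to any pair, so randomness extracted from different physical systems is interchangeable as far as asymptotic completeness is concerned.

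The main obstacle is not computational but conceptual: pinning down what ``equal in terms of asymptotic completeness'' should mean and checking the construction is genuinely uniform. One point to handle with care is that $R_n$ is a family of finite objects indexed by $n$, so the phrase ``as $n \to \infty$'' must be read as a statement about the limiting set of infinite measurement sequences (equivalently, the direct limit of the $R_n$), and the bijections above must be shown to pass to that limit. A second point is the choice of modulus: if the two measurements naturally use different $q$, one must observe that re-expressing a real number in a different base is itself a bijection on $[0,1]$ (up to the usual countable set of non-unique expansions, which does not affect cardinality), so the common target $[0,1]$ is legitimate. Once these are settled, the corollary is immediate from Theorem~\ref{the-main}.
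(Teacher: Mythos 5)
The paper offers no proof of this corollary, treating it as an immediate consequence of Theorem~\ref{the-main} and the definition of asymptotic completeness; your argument --- apply Theorem~\ref{the-main} to each physical system separately, note that the resulting sets both have the cardinality of $\mathbf{R}$, and observe that nothing in the construction depended on which physical system supplied the randomness --- is exactly that intended derivation. Your additional remarks about passing to the limit of the $R_n$ and about reconciling different moduli $q$ are sensible refinements but do not change the route.
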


\begin{definition}
\textbf{Pseudorandomness}: The property of logical systems that exclude structural properties.
\end{definition}

\begin{theorem} \label{the-com}
The combination of randomness and pseudorandomness is randomness.
\end{theorem}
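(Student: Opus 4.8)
The plan is to reduce Theorem~\ref{the-com} to the cardinality criterion already established in Theorem~\ref{the-main}, so that "is randomness" is understood throughout as "is asymptotically complete." First I would pin down what "combination" means operationally: given a random system $\rm{RS}$ and a purely pseudorandom logical subsystem, the composite measurement $m$ now reads off, at each step, a digit in $\mathbf{Z}/q\mathbf{Z}$ obtained by correlating the true randomness of $\rm{RS}$ with the observable state of the combined logical system. By the definition of Pseudorandomness together with Lemma~\ref{lem-tao}, that observable state decomposes as $f = f_{str} + f_{psd} + f_{err}$ with $\norm{f_{err}}_{H} \leqslant \e$, where $f_{psd}$ carries no structural content; so, exactly as in the proof of the preceding Proposition, the randomness of $\rm{RS}$ is allowed to act through the structured component $f_{str} = \sum_{1 \leqslant i \leqslant M} c_i v_i$, while the pseudorandom and error parts are merely carried along.

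Next I would establish a two-sided cardinality bound for $R_n$ in the limit $n \to \infty$. For the lower bound, note that letting the $\rm{RS}$ randomness perturb the coefficients $c_i$ of $f_{str}$ already reproduces, up to the bounded distortion contributed by $f_{psd} + f_{err}$, every admissible digit, so applying Theorem~\ref{the-main} to the $\rm{RS}$ component in isolation yields a surjection onto $[0,1]$ in the limit and hence $|R_n| \geqslant |\mathbf{R}|$. For the upper bound, every element of $R_n$ is a word $m_1 m_2 \dots m_n$ over the finite alphabet $\mathbf{Z}/q\mathbf{Z}$, so as $n \to \infty$ the set $R_n$ embeds into $(\mathbf{Z}/q\mathbf{Z})^{\mathbf{N}}$, whose cardinality is $|\mathbf{R}|$ for every $q \geqslant 2$. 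Cantor--Schr\"oder--Bernstein then forces $|R_n| = |\mathbf{R}|$ in the limit, which is precisely the statement that the combined system is asymptotically complete, i.e. random.

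The hard part will be the lower bound: one must verify that the pseudorandom and error contributions cannot cancel the variability injected by $\rm{RS}$, that is, that the map from the $\rm{RS}$ randomness to the emitted digit stays essentially onto once the logical system's state is folded in. This is exactly where the two defining features of pseudorandomness are used — it is structureless, so it cannot conspire to fix a digit uniformly across all measurements, and by Lemma~\ref{lem-tao} it is quantitatively $1/F(M)$-small relative to the chosen resolution once $F$ is taken to grow fast enough, so the correlation $m$ still separates $q$ distinct outcomes. A secondary, bookkeeping-level point is the order of limits: I would fix the resolution $q$ and the Tao parameter $M = O_{F,\e}(1)$ first, then send $n \to \infty$, and only afterwards let $\e \to 0$, so that the error term remains uniformly negligible and the lower-bound surjection is not disturbed.
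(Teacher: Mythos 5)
The paper's own proof of Theorem~\ref{the-com} consists of the single sentence ``It's trivial,'' so your proposal is not a variant of the paper's argument but a wholesale replacement for it: you supply an operational meaning for ``combination'' (the composite measurement $m$ acting on the Tao decomposition of the logical system's state), you fix a precise meaning for ``is randomness'' (asymptotic completeness in the sense of Theorem~\ref{the-main}), and you convert the claim into a two-sided cardinality bound closed by Cantor--Schr\"oder--Bernstein. That is considerably more than the paper offers, and your upper bound (embedding $R_n$ into $(\mathbf{Z}/q\mathbf{Z})^{\mathbf{N}}$, which has the cardinality of $\mathbf{R}$) is sound. What the paper's one-line proof amounts to is treating the claim as an axiom --- that true randomness dominates any deterministic admixture --- and your version at least makes explicit which unproved assumptions that axiom conceals.

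Those assumptions are also where your argument has genuine gaps, one of which you only partly acknowledge. First, your lower bound ultimately rests on the surjectivity claim inside Theorem~\ref{the-main} (``for every $r_i \in [0,1]$ we can find a sequence $r_{in}$ with $0.r_{in} = r_i$''), which the paper asserts without argument; reducing Theorem~\ref{the-com} to it does not discharge it. Second, and more concretely, you justify the non-cancellation step by saying $f_{psd}$ is ``quantitatively $1/F(M)$-small relative to the chosen resolution.'' That misreads Lemma~\ref{lem-tao}: being $1/F(M)$-pseudorandom means having correlation at most $1/F(M)$ with the structured vectors of $S$, not having small norm --- only $f_{err}$ is bounded in norm by $\epsilon$, while $f_{psd}$ may have norm comparable to $1$. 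So the pseudorandom component is not negligible in the metric sense your surjectivity argument needs; to close the lower bound you would have to show that small correlation with $f_{str}$ already prevents the digit map from collapsing, which is a different and unproved statement. As it stands, your proof establishes only the upper bound $|R_n| \leqslant |\mathbf{R}|$ rigorously.
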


\begin{proof}
It's trivial.
\end{proof}

In 1975, Chaitin found a profound relationship between randomness and pseudorandomness of the output sequences of measurements. In other words, pseudorandomness can be treated as a special case of randomness.

\begin{lemma} \label{lem-chai} \rm{\cite{chaitin1975randomness,chaitin1975theory,zurek1989algorithmic}}
It impossible to prove  the randomness of a "random-looking" long string is a natural consequence of the algorithmic definition of randomness.
\end{lemma}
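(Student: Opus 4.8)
The plan is to read this statement through the lens of Chaitin's incompleteness phenomenon for algorithmic information theory: I would identify ``the randomness of a long string $s$'' with incompressibility, i.e.\ with an assertion of the form $K(s) > c$, where $K$ is the Kolmogorov--Chaitin complexity furnished by the algorithmic definition of randomness and $c$ is comparable to $|s|$. What then has to be shown is that no sound, recursively axiomatized theory $T$ strong enough to describe Turing machines can prove such an assertion for any sufficiently long concrete string — so a ``random-looking'' appearance can never be upgraded to a theorem of randomness, which is exactly the content of the lemma.

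First I would fix the setup: let $T$ be consistent, sound for the relevant $\Sigma_1$-type statements about computation, and recursively axiomatized, so that its theorems are computably enumerable, and fix a universal machine $U$ defining $K = K_U$. The core step is then a Berry-paradox diagonalization. Construct a machine $P$ that, on input the binary numeral for $n$, enumerates the theorems of $T$ and halts with the first string $s$ for which it encounters a proof of a statement asserting $K(s) > n$. Now run the dichotomy: if $T$ proved some statement $K(s) > n$ for arbitrarily large $n$, then $P$ would halt on such $n$ and output a string $s_n$ which, by soundness of $T$, genuinely satisfies $K(s_n) > n$; but $s_n$ is produced by the fixed program $P$ from the input $n$, so $K(s_n) \le |P| + \log_2 n + O(1)$, which for all large $n$ contradicts $K(s_n) > n$. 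Hence there is a constant $c_T$ — essentially $|P|$ up to additive terms, depending only on $T$ and $U$ — such that $T$ proves $K(s) > c_T$ for no concrete string $s$ at all; since every genuinely random long string has complexity exceeding $c_T$, its randomness is unprovable in $T$.

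The real obstacles I expect are definitional rather than combinatorial. One must pin down what a ``proof that $s$ is random'' is allowed to be: insisting only on incompressibility up to an additive constant makes the claim vacuous for short strings, so the word ``long'' must be handled by comparing the fixed threshold $c_T$ against $|s|$ and observing that essentially all strings past that length are incompressible yet none is certifiably so. One also has to check that the enumeration of theorems, the internal coding of strings and of the predicate $K(\cdot) > \cdot$ inside $T$, and the additive constants from the invariance theorem all compose cleanly, so that $K(s_n) \le |P| + \log_2 n + O(1)$ is legitimate; this is routine once the machine model is fixed but is where a careless treatment breaks. Finally I would note that $c_T$ is genuinely theory- and machine-dependent, which is precisely why the impossibility concerns \emph{proving} randomness of a specific string rather than randomness itself — consistent with the reading, reinforced by Theorem~\ref{the-com}, that pseudorandomness is only a provable-looking shadow of true randomness.
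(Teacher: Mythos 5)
Your argument is mathematically correct: it is the standard Berry-paradox diagonalization behind Chaitin's incompleteness theorem, and the dichotomy you run (a proof-searching machine $P$ of fixed size would, by soundness of $T$, output a string $s_n$ with $K(s_n) > n$ while simultaneously witnessing $K(s_n) \le |P| + \log_2 n + O(1)$) does establish the theory-dependent constant $c_T$ beyond which no concrete incompressibility claim is provable. The comparison with the paper, however, is lopsided: the paper supplies \emph{no proof at all} for this lemma --- it is imported wholesale as a citation to Chaitin (1975) and Zurek (1989), and the surrounding text only uses it informally to justify accepting a PRNG once its output passes statistical tests. So you have reconstructed the argument that lives inside the cited references rather than matched anything in the paper itself. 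What your route buys is precision that the paper's statement lacks: the lemma as printed is grammatically garbled and never says what ``prove,'' ``random,'' or ``long'' mean, whereas your reading (randomness as $K(s) > c$ with $c$ comparable to $|s|$, provability relative to a sound recursively axiomatized $T$, and ``long'' meaning $|s|$ exceeds the theory-dependent threshold $c_T$) is the only formalization under which the claim is a theorem. The one caveat worth flagging is that your conclusion is strictly about \emph{provability in a fixed formal system}; the paper's subsequent use of the lemma --- that a PRNG ``can be accepted'' once its output looks random --- is a methodological gloss that does not follow from, and is not part of, the mathematical statement you proved.
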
 

So when the output strings of a PRNG are tested to be random enough for its application, the PRNG can be accepted. In addition, the randomness of the strings can be amplified in statistical or computational ways.

\begin{example}
{\rm{\cite{yao1982theory,levin1987one}} } Entropy extractor PRNGs are based on various mathematical theories of computer science, and  the XOR gate is one of the simplest. An XOR gate takes two binary values and outputs one. If the input is different, (0, 1) or (1, 0), the output is 1. If the input is the same, then we'll have 0. XOR can amply the computational unpredictability of output strings, when the inputs are independent instances.
\end{example}

\subsection{Applications}

The randomness phenomena in physical and logical systems are the outcome of observations and measurements, namely the results of interactions between subsystems. And we also realize that the true or intrinsic randomness can lead us to the infinity. 

In the applications of algorithmic randomness to design RNGs,  we come to the probability theory paradox \cite{shen2017kolmogorov}, to satisfy the test criteria, such as those from NIST and Dieharder, events with small probability are restricted and even excluded. 

The pseudo RNG is a trade-off between computation efficiency and randomness. Theorem \ref{the-com} and Lemma \ref{lem-chai} ensure a way to more randomness. It's a natural way to design random number generators $G$ depending on the interactions of subsystems $SS$ and $RS$. Here we choose a logical subsystems $SS$, such as a pseudo RNG. With statistically equal segmentation, the output sequences of pseudorandom numbers can take on a role of digital dice when combined with the measurement results of $RS$.
$$
G := RS \circ SS
$$
\quad Moreover, randomness can help us do more in the field of AI. As Theorem \ref{the-main} shows the equivalence of randomness and space-time, if AI should be intelligent as human beings, randomness can help AI to grow and evolve in a natural way. 

\section{Conclusion}

In this work, we show different aspects of the theory of randomness and summarize the relationship between randomness and infinity in the sense that the set formed by randomly generated sequences has asymptotic completeness. For the application of randomness theory, we propose to use the combination of physical and logical systems to construct random number generators, so as to improve randomness and computational efficiency. At the same time, we believe that randomness is closely related to the development of artificial intelligence technology.

\bibliographystyle{IEEEbib}
\bibliography{Randomness}

\end{document}